\newcommand*{\D}{\ensuremath{\mathcal D}}
\newcommand{\e}{\varepsilon}
\theoremstyle{plain}
\newtheorem{theorem}{Theorem}
\newtheorem*{theorem*}{Theorem}
\newtheorem{lemma}{Lemma}
\newtheorem{rem}{Remark}
\newcommand{\be}{\begin{equation}}
\newcommand{\ee}{\end{equation}}
\begin{document}
\title{ Weighted quantile correlation test for the logistic family}
\author{Ferenc Balogh}
\address{SISSA (Scuola Internazionale Superiore di Studi Avanzati)}
\curraddr{via Bonomea, 265, Trieste, Italy}
\email{fbalogh@sissa.it}
\thanks{}
\author{\'Eva Krauczi}
\address{Department of Mathematics, College of Kecskem\'et}
\curraddr{Izs\'aki \'ut 10, Kecskem\'et, Hungary--6000}
\email{osztenyine.eva@gamf.kefo.hu}
\subjclass[2000]{62F05, 60G15, 62Q05}
\keywords{ Correlation test, Karhunen-Lo\`eve expansion, power study,
 simulation, test of Logistic distribution, Wasserstein distance.}
\date{\today}

\begin{abstract}
We summarize the results of investigating the
asymptotic behavior of the weighted quantile correlation tests 
for the location-scale family associated to the logistic distribution. 
Explicit representations of the limiting distribution are given in terms 
of integrals of weighted Brownian bridges or
alternatively as infinite series of independent Gaussian random variables. 
The power of this test and the test for the location logistic family
against some alternatives are demonstrated by numerical simulations.

\end{abstract}

\maketitle


\section{Introduction}
\label{sec:intro}

The logistic distribution
\be
\label{eq:log_dist}
G(x)=\frac{1}{1+e^{-x}} \qquad x \in \mathbb{R}\ ,
\ee
 as the logistic growth curve was introduced 
in the mid-nineteenth century by Verhulst in his population dynamics study \cite{ver}.
The first purely statistical interpretation of the logistic distribution was found 
by Gumbel \cite{gumbel} in 1944 who showed 
that it is the asymptotic distribution of the midrange of random samples 
from symmetric continuous distributions.
Balakrishnan devoted a book to the logistic distribution \cite{handbook}, 
including goodness-of-fit tests. The routine goodness-of-fit 
techniques are presented: chi-squared tests, EDF statistics and tests based on 
regression and correlation. More tests on assessing the fit to the logistic 
distribution may be found in Aguirre and Nikulin \cite{AN} and Meintanis \cite{Me}. 
The main goal of the present paper is to introduce weighted quantile correlation tests
 for the location and the location-scale logistic families, introduced below.

The quantile correlation test statistics for goodness-of-fit to a family
 of probability distributions based on the $L_2$-Wasserstein 
distance were introduced by del Barrio, Cuesta-Albertos,
Matr\'an and Rodr\'{\i}\-guez-Rodr\'{\i}guez in \cite{BCMR}, 
considering goodness-of-fit tests to the normal family, and
del Barrio, Cuesta-Albertos and Matr\'an in \cite{BCM}. 
The asymptotic distributions of the test statistics are expressed in terms of
the Karhunen-Lo\`eve expansion of some associated weighted Brownian bridges.

The use of weight functions in the test statistics were independently suggested
 by de Wet in \cite{W1} and \cite{W2} and by Cs\"org\H o in \cite{Cs2} and \cite{Cs1}. 
Cs\"org\H o and Szab\'o introduced the new tests for several families 
 of probability distributions in \cite{CsSz} and \cite{CsSz2}.
 
 In this paper we use the same technique to obtain limiting distributions 
 for the weighted quantile correlation tests for the logistic family,
 using the known Karhunen-Lo\`eve expansion of the stochastic process
 \be
 \label{weighted_bridge}
 Z(t) = \frac{1}{\sqrt{t(1-t)}}B(t)\qquad 0<t<1\ ,
 \ee
 where $B(t)$ is the Brownian bridge on $[0,1]$ (see \cite{AD}).
 
The paper is organized as follows: in Section \ref{sec:wqct} we introduce 
weighted quantile correlation test statistics in detail and recall 
some earlier results on their limiting distributions. 
Section \ref{sec:logistic_tests} specializes the location-scale statistic for the logistic family, 
and the asymptotic distribution of the test statistic is given Theorem \ref{thm:limiting_integral}.
In Section \ref{sec:series} a different representation of the above limiting distribution
is obtained in Theorem \ref{thm:limiting_series}, given in terms of an infinite series
of independent Gaussian random variables.
Section \ref{sec:sim} contains a simulation study to evaluate the power of 
this test and the test for the location logistic family.


\section{Weighted quantile correlation tests and their asymptotics}
\label{sec:wqct}
 Given a random sample $X_1,\ldots,X_n$ with common 
distribution function $F(x)=P(X \leq x)$ on the real line $\mathbb{R}$, 
with the pertaining order statistics $X_{1,n}\le X_{2,n}\le\cdots\le X_{n,n}$,
let 
\be
F_n(x) = \frac{1}{n}\sum_{k=1}^n I\{X_k\le x\}\ , \qquad x\in\mathbb{R}
\ee
be
the empirical distribution function and let
\be
Q_n(t) = X_{k,n} \mbox{ if } (k-1)/n<t\le k/n\ ,\qquad  k=1,2,\ldots,n
\ee
be the sample quantile function. For a given distribution function $G(x)$,
 and for $\theta\in \mathbb{R}$ and $\sigma>0$, let $G^\theta_\sigma(x)=G((x-\theta)/\sigma)$,
 $x\in\mathbb{R}$, and consider the location-scale and location families
\be
{\mathcal G}_{l,s}=\{G^\theta_\sigma : \theta\in \mathbb{R}, \sigma>0\} \ ,\qquad 
{\mathcal G}_{l}=\{G^\theta_1 : \theta\in \mathbb{R} \}\ .
\ee
Denote by $Q_G(t)=G^{-1}(t)=\inf\{x\in\mathbb{R}: G(x)\geq t\}, 0<t<1$, the quantile function of $G$. 
Consider a weight function $w\ :\ (0,1)\to [0,\infty)$ satisfying $\int_0^1 w(t)\mathrm{d}t=1$.
Assume that the weighted second moment 
\be
\mu_2(G,w):=\int_0^1 Q_G^2(t) w(t)dt=\int_{-\infty}^\infty x^2 w(G(x))\mathrm{d}G(x)<\infty
\ee 
and the corresponding first moment
\be
 \mu_1(G,w):=\int_0^1Q_G(t)w(t)\mathrm{d}t=\int_{-\infty}^\infty x w(G(x))\mathrm{d}G(x)
\ee
 are also finite, as well as the generated variance 
 \be
 \nu(G,w):=\mu_2(G,w)-\mu_1^2(G,w)>0\ .
\ee
The weighted $L_2$-Wasserstein distance with weight function $w$ of two distributions $F$ and $G$ can be defined as 
\be
{\mathcal W}_w(F,G):= \left[\int_0^1 \left(Q_F(t)-Q_G(t)\right)^2 w(t)\mathrm{d}t\right]^{\frac{1}{2}}\ .
\ee
Therefore the weighted $L_2$-Wasserstein distance 
 ${\mathcal W}_w(F,{\mathcal G}_{l})= \inf \{{\mathcal W}_w(F,G): G\in {\mathcal G}_{l} \}$
 between $F$ and the location family ${\mathcal G}_{l}$ is 
 \be
 {\mathcal W}^2_w(F,{\mathcal G}_{l})=\int_0^1 \left(Q_F(t)-Q_G(t)\right)^2 w(t)\mathrm{d}t-\left[\int_0^1 
\left(Q_F(t)-Q_G(t)\right) w(t)\mathrm{d}t\right]^2 
 \ee
 and the weighted $L_2$-Wasserstein distance
 ${\mathcal W}_w(F,{\mathcal G}_{l,s})= \inf \{{\mathcal W}_w(F,G): G\in {\mathcal G}_{l,s} \}$ 
 between $F$ and location-scale family ${\mathcal G}_{l,s}$, scaled to $F$ is 
 \be
 \frac{{\mathcal W}^2_w(F,{\mathcal G}_{l,s})}{\nu(F,w)}=
 1-\frac{\left[\int_0^1 Q_F(t) Q_G(t) w(t)\mathrm{d}t-\mu_1(F,w)\mu_1(G,w) \right]^2}{\nu (F,w) \nu (G,w) }, 
 \ee
 as derived in \cite{Cs1}.
 
Then the location- and scale-free test statistic for the null-hypothesis
$H_0: F\in{\mathcal G}_{l,s}$ is
{\setlength\arraycolsep{.13889em}
\begin{eqnarray}
 &  V_n & =1-\frac{\left[\int_0^1 Q_n(t) Q_G(t) w(t)\mathrm{d}t-\mu_1(G,w) \int_0^1 Q_n(t) w(t)
\mathrm{d}t \right]^2}{ \nu (G,w) \left[\int_0^1 Q_n^2(t) w(t)\mathrm{d}t-\left(\int_0^1 Q_n(t) 
w(t)\mathrm{d}t\right)^2\right]} \\
 & &=1-\frac{\left[\sum_{k=1}^n X_{k,n}\left\{ \int_\frac{k-1}{n}^\frac{k}{n} Q_G(t) w(t)\mathrm{d}t-
\mu_1(G,w) \int_\frac{k-1}{n}^\frac{k}{n} w(t)\mathrm{d}t\right\}\right]^2}{ \nu (G,w) 
\left[\sum_{k=1}^n X_{k,n}^2\int_\frac{k-1}{n}^\frac{k}{n} w(t)\mathrm{d}t-\left(\sum_{k=1}^n X_{k,n}
\int_\frac{k-1}{n}^\frac{k}{n} w(t)\mathrm{d}t\right)^2\right]}\ 
\end{eqnarray}}
and the location-free test statistic for the null-hypothesis
$H_0: F\in{\mathcal G}_{l}$ is
{\setlength\arraycolsep{.13889em}
\begin{eqnarray}
 &  W_n & =\int_0^1 \left\{Q_n(t)-Q_G(t)\right\}^2 w(t)\mathrm{d}t-\left[\int_0^1 
\left\{Q_n(t)-Q_G(t)\right\} w(t)\mathrm{d}t\right]^2  \\
 & &=\nu (G,w) + \sum_{k=1}^n X_{k,n}^2\int_\frac{k-1}{n}^\frac{k}{n} w(t)\mathrm{d}t-
\left[\sum_{k=1}^n X_{k,n}\int_\frac{k-1}{n}^\frac{k}{n} w(t)\mathrm{d}t\right]^2 \\
\nonumber && \quad -2\sum_{k=1}^n X_{k,n}\left\{\int_\frac{k-1}{n}^\frac{k}{n} Q_G(t)w(t)\mathrm{d}t
-\mu_1(G,w)\int_\frac{k-1}{n}^\frac{k}{n} w(t)\mathrm{d}t\right\}. 
\end{eqnarray}}
 
 Understanding asymptotic relations as $n\to \infty$ throughout
this note, the symbols $\smash{\mathop{\longrightarrow}\limits^{\mathcal D}}$ and 
$\smash{\mathop{\longrightarrow}\limits^{ P}}$  denote convergence 
in distribution and in probability, respectively.

For the endpoints of the support of $G$, introduce 
\be
-\infty\leq a_G=\sup\{x: G(x)=0\}<\inf\{x: G(X)=1\}=b_G\leq\infty\ .
\ee
Finally, for each $n\in{\mathbb{N}}$ let $Y_{1,n}\le Y_{2,n}\le\cdots\le Y_{n,n}$ denote 
the order statistics of a sample  $Y_1,Y_2,\ldots,Y_n$ from $G$. 

The asymptotic distributions of $V_n$ and $W_n$ are of main practical interest 
for the statistician; to determine the limiting behaviour of these statistics, 
below we use the following general result due to Cs\"org\H{o}.

\begin{theorem*}[Cs\"org\H{o} \cite{Cs1}] \label{cs}
Let $w(\cdot)$ be a nonnegative integrable function on the interval $(0,1)$, 
for which $\int_0^1 w(t)\mathrm{d}t=1$.
Suppose that $G$ has finite weighted second moment and 
that it is twice differentiable on the open interval $(a_G,b_G)$ 
such that  $g(x)=G'(x)>0$ for all $x\in (a_G,b_G)$. If the conditions
 \begin{align}
 \label{sup}
\sup_{0<t<1}\frac{t(1-t)|g'(Q_G(t))|}{g^2(Q_G(t))}&<\infty\ ,\\
\label{int}
\int_0^1\frac{t(1-t)}{g^2(Q_G(t))}w(t)\mathrm{d}t&<\infty\ ,
\end{align}
and
\begin{align}
 \label{stoc1}
& n \int_0^{\frac{1}{n+1}}\left[Y_{1,n}-Q_G(t)\right]^2 w(t)\mathrm{d}t\;
\smash{\mathop{\longrightarrow}\limits^{ P}}\; 0\,,\\
\label{stoc2}
& n \int_{\frac{n}{n+1}}^1\left[Y_{n,n}-Q_G(t)\right]^2 w(t)\mathrm{d}t\;
\smash{\mathop{\longrightarrow}\limits^{ P}}\; 0\,,
\end{align}
are satisfied, the following asymptotics are valid:\\
\begin{enumerate}
 \item 
 If $F$ belong to ${\mathcal G}_{l}$ generated by $G$, then
\be
n W_n\;\smash{\mathop{\longrightarrow}\limits^{\mathcal D}}\; W\,,
\ee
where 
\be
  W = \int_0^1\frac{B^2(t) }{g^2(Q_G(t))}w(t)\mathrm{d}t- 
  \left[\int_0^1 \frac{B(t) }{g(Q_G(t))}w(t)  \mathrm{d}t\right]^2\ ,
\ee
for the location family, and
\item 
 if $F$ belong to ${\mathcal G}_{l,s}$ generated by $G$, then
 \be
n V_n\;\smash{\mathop{\longrightarrow}\limits^{\mathcal D}}\; V\,,
\ee
where 
\begin{align}
 V&=\frac{1}{\nu (G,w)}\left\{ \int_0^1\frac{B^2(t) }{g^2(Q_G(t))}w(t)\mathrm{d}t- 
\left[\int_0^1 \frac{B(t) }{g(Q_G(t))}w(t) \mathrm{d}t\right]^2\right\}\\
\nonumber &\ \ \  -\left[\frac{1}{\nu(G,w)}\int_0^1 \frac{B(t) Q_G(t)}{g(Q_G(t))}w(t)\mathrm{d}t-
\frac{\mu_1(G,w)}{\nu(G,w)}\int_0^1\frac{B(t) }{g(Q_G(t))}w(t) \mathrm{d}t\right]^2\ ,
\end{align}
for the location-scale family.
\end{enumerate}
\end{theorem*}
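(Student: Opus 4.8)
The plan is to exploit the invariance of the statistics to reduce to the canonical null configuration and then to linearize the sample quantile function in terms of a process whose weak limit is the Brownian bridge $B$. Since $V_n$ is invariant under affine transformations of the data and $W_n$ under translations, under $H_0$ I may assume that the sample comes directly from $G$, so that $X_{k,n}=Q_G(U_{k,n})$, where $U_{1,n}\le\cdots\le U_{n,n}$ are the order statistics of a uniform sample on $(0,1)$. Writing $U_n(\cdot)$ for the associated uniform sample quantile function, the uniform quantile process $u_n(t)=\sqrt{n}\,(U_n(t)-t)$ converges weakly to a Brownian bridge; since only quadratic functionals of it enter, the sign is immaterial and I identify the limit with $B(t)$. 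The natural weighting throughout is by $\sqrt{t(1-t)}$, because $\mathrm{Var}\,B(t)=t(1-t)$, which explains the appearance of the factor $t(1-t)$ in \eqref{sup} and \eqref{int}.

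First I would establish the basic linearization
\be
\sqrt{n}\,\bigl(Q_n(t)-Q_G(t)\bigr)=\frac{u_n(t)}{g(Q_G(t))}+R_n(t)\,,
\ee
coming from a one-term Taylor expansion of $Q_G=G^{-1}$, whose differentiability follows from $g>0$ on $(a_G,b_G)$. The second-order term of this expansion carries the factor $g'(Q_G)/g^2(Q_G)$, and condition \eqref{sup} is precisely what bounds the remainder $R_n$ uniformly after the weighting by $\sqrt{t(1-t)}$. Condition \eqref{int} guarantees that the candidate limit functions $B(t)/g(Q_G(t))$ and $B(t)Q_G(t)/g(Q_G(t))$ lie in $L_2(w)$ almost surely, so that every integral appearing in $W$ and $V$ is well defined and the relevant linear and quadratic functionals are continuous in the weighted topology.

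Next I would control the contribution of the extreme order statistics. Splitting each integral into the central block $[\tfrac{1}{n+1},\tfrac{n}{n+1}]$ and the two tails, the quadratic functional on the tails is dominated by $n\int_0^{1/(n+1)}[Y_{1,n}-Q_G(t)]^2w(t)\,\mathrm{d}t$ and its right-tail analogue, which vanish in probability by \eqref{stoc1} and \eqref{stoc2}, while the cross terms are absorbed by Cauchy--Schwarz. On the central block the weighted uniform quantile process converges weakly to the weighted Brownian bridge. Combining the linearization, the remainder bound, and the tail estimates, the continuous mapping theorem gives
\be
n\,W_n=\int_0^1\bigl[\sqrt{n}\,(Q_n-Q_G)\bigr]^2w\,\mathrm{d}t-\Bigl[\int_0^1\sqrt{n}\,(Q_n-Q_G)\,w\,\mathrm{d}t\Bigr]^2\;\smash{\mathop{\longrightarrow}\limits^{\mathcal D}}\;W\,,
\ee
which proves part (1).

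Finally, for part (2) I would apply the delta method to the ratio functional defining $V_n$. Writing $Q_n=Q_G+n^{-1/2}\psi_n$ with $\psi_n\rightarrow B/g(Q_G)$ in the weighted sense, and expanding both the squared numerator $A_n^2$ and $\nu(G,w)$ times the bracketed denominator $C_n$ to order $n^{-1}$, the leading $n^{-1/2}$ increments of $A_n^2$ and of $\nu(G,w)\,C_n$ coincide and cancel in the difference $\nu(G,w)\,C_n-A_n^2$, using $\int_0^1 Q_G^2\,w\,\mathrm{d}t-\mu_1^2(G,w)=\nu(G,w)$. What survives is the order $n^{-1}$ term, so that $nV_n$ converges in distribution to $\nu(G,w)^{-1}$ times the squared $L_2(w)$-norm of the component of $B/g(Q_G)$ orthogonal to $\mathrm{span}\{1,Q_G\}$; writing out this residual reproduces exactly the two-term expression for $V$. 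The main obstacle is not the algebra but the analytic control of the boundary: verifying that the Taylor remainder $R_n$ and the extreme-value contributions are asymptotically negligible in the weighted $L_2$ norm, uniformly enough to pass to the limit. This is precisely where the interplay of the regularity hypotheses \eqref{sup}, \eqref{int} with the stochastic boundary conditions \eqref{stoc1}, \eqref{stoc2} is essential.
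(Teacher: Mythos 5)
The first thing to note is that the paper itself contains no proof of this statement: it is imported verbatim as a known theorem of Cs\"org\H{o} \cite{Cs1} (see also \cite{BCMR}), so there is no internal argument to compare yours against; your proposal can only be measured against the proof in the cited literature. Measured that way, your roadmap is faithful to the standard one: the invariance reduction to $F=G$, the representation $X_{k,n}=Q_G(U_{k,n})$ and linearization of the quantile process with \eqref{sup} (the Cs\"org\H{o}--R\'ev\'esz-type condition) controlling the Taylor remainder, the tail splitting at $1/(n+1)$ and $n/(n+1)$ handled by \eqref{stoc1}--\eqref{stoc2} (correctly using that $Q_n\equiv Y_{1,n}$ resp.\ $Y_{n,n}$ on those intervals) with cross terms by Cauchy--Schwarz, and the delta-method cancellation for part (2). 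Your algebra there is exactly right: with $Q_n=Q_G+n^{-1/2}\psi_n$ the $n^{-1/2}$ terms of $A_n^2$ and $\nu(G,w)C_n$ cancel, and the surviving $n^{-1}$ term identifies $V$ as $\nu(G,w)^{-1}$ times the squared $L_2(w)$-norm of the component of $B/g(Q_G)$ orthogonal to $\mathrm{span}\{1,Q_G\}$, which matches the stated two-term formula since $\{1,(Q_G-\mu_1)/\sqrt{\nu}\}$ is orthonormal in $L_2(w)$.

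There is, however, a genuine gap at the step you dispatch with ``on the central block the weighted uniform quantile process converges weakly to the weighted Brownian bridge \dots\ the continuous mapping theorem gives'' the conclusion. The continuous mapping theorem is not available here: the quadratic functional $h\mapsto\int_0^1 h^2(t)\,w(t)/g^2(Q_G(t))\,\mathrm{d}t$ is in general not continuous --- not even finite --- on any function space in which $u_n\Rightarrow B$ holds, because \eqref{int} only requires integrability of $t(1-t)w(t)/g^2(Q_G(t))$, not of $w(t)/g^2(Q_G(t))$ itself; in the logistic application of this very paper the latter equals $6/(t(1-t))$, which is non-integrable, and $\int_0^1 B^2 w/g^2(Q_G)\,\mathrm{d}t$ is a.s.\ finite only because $EB^2(t)=t(1-t)$ feeds \eqref{int}. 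Closing this requires exactly the machinery your sketch omits: a weighted strong approximation of the uniform quantile process by a sequence of Brownian bridges on the same probability space (the Cs\"org\H{o}--Cs\"org\H{o}--Horv\'ath--Mason/KMT-type bounds with weight $(t(1-t))^{1/2-\nu}$ on $[1/(n+1),n/(n+1)]$), combined with a truncation argument: apply ordinary weak convergence on $[\delta,1-\delta]$ and make the slices $[1/(n+1),\delta]$ and $[1-\delta,n/(n+1)]$ uniformly small in $n$ via \eqref{int}. You candidly flag this boundary control as ``the main obstacle,'' but flagging it is not supplying it, and it is the analytic heart of Cs\"org\H{o}'s proof; as written, your proposal is a correct and well-organized plan rather than a complete proof.
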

This theorem will be used in the next section to establish the asymptotic distributions
of the test statistics specialized to the logistic families.


\section{Tests for the logistic families and their asymptotics}
\label{sec:logistic_tests}

Consider the logistic distribution function \eqref{eq:log_dist}
with density function 
\be
g(x)=\frac{e^{-x}}{(1+e^{-x})^2}\qquad 
 \ x\in\mathbb{R}\ ,
\ee
 and ${\mathcal G}_{l,s}$ denotes the logistic location-scale family
 and ${\mathcal G}_{l}$ denotes the logistic location family as defined above. 
The corresponding quantile function is 
\be
Q_G(u)=\ln\frac{u}{1-u}\qquad  0<u<1\ .
 \ee
For the logistic location family ${\mathcal G}_{l}$ de Wet suggested in \cite{W2} the use of the weight function
$w(t)=\frac{L_1'(Q_G(t))}{I_1} $ is obtained, where $L_1(x)=\frac{-g'(x)}{g(x)}$, 
 $x\in \mathbb{R}$ and $I_1=\int_\mathbb{R} L_1'(x) g(x)\mathrm{d}x$, which gives
\be
\label{weight}
w(t)=6t(1-t)\qquad 0<t<1\ .
\ee

Note that de Wet proposes different weight functions for the logistic location 
and logistic scale families. The goal of this paper is to assess the use of 
the weight function \eqref{weight} for the combined logistic location-scale family.

Using integration by substitution and $\int_0^1 \frac{\ln t}{t-1}\mathrm{d}t=\frac{\pi^2}{6}$, we obtain 
\begin{align}
\mu_1(G,w)&=\int_0^1 6t(1-t)\ln\left(\frac{t}{1-t}\right)\mathrm{d}t=0,\\
\mu_2(G,w)&=\int_0^1 6t(1-t)\ln^2\left(\frac{t}{1-t}\right)\mathrm{d}t=\frac{\pi^2}{3}-2\ .
\end{align}
The above introduced location-scale-free test statistic specializes to

\be  V_n =1-\frac{\left[\displaystyle\sum_{k=1}^n a_{k,n}X_{k,n} \right]^2}
{ \left(\displaystyle\frac{\pi^2}{3}-2\right) \left[\displaystyle\sum_{k=1}^n b_{k,n}X_{k,n}^2-
\left(\sum_{k=1}^n b_{k,n}X_{k,n}\right)^2\right]}\ ,
\ee
where the coefficients are given explicitly by
\begin{align}
\nonumber
a_{k,n}&=\int_\frac{k-1}{n}^\frac{k}{n} 6t(1-t)\ln\left(\frac{t}{1-t}\right)\mathrm{d}t\\
\nonumber
&=\frac{k^2(3n-2k)}{n^3}\ln\frac{k}{n-k}-\frac{(k-1)^2(3n-2k+2)}{n^3}\ln\frac{k-1}{n-k+1}\\
&\ +\ln\frac{n-k}{n-k+1}+\frac{1-2k}{n^2}+\frac{1}{n}\ ,\\
b_{k,n}&=\int_\frac{k-1}{n}^\frac{k}{n} 6t(1-t)\mathrm{d}t=\frac{3(2k-1)}{n^2}+\frac{2(-3k^2+3k-1)}{n^3}\ .
\end{align} 
\begin{rem}
Note that the location-free test statistic is
\be
   W_n  = \left(\frac{\pi^2}{3}-2\right)+ \sum_{k=1}^n b_{k,n}X_{k,n}^2
-\left[\sum_{k=1}^n b_{k,n}X_{k,n}\right]^2-2\sum_{k=1}^n a_{k,n}X_{k,n}\ ,
\ee
(see \cite{W2}).
\end{rem}
As a corollary to the asymptotic results from \cite{Cs1} 
we obtain the following limiting distribution of the test statistics $V_n$.

\begin{theorem}
\label{thm:limiting_integral}
If the distribution function $F$ of the sample belongs to 
the logistic location-scale family ${\mathcal G}_{l,s}$ then
the rescaled statistic $nV_n$ has the asymptotic distribution
\be
n V_n\;\smash{\mathop{\longrightarrow}\limits^{\mathcal D}}\; V\,,
\ee
where 
\begin{align}
\nonumber
V &=\frac{1}{\frac{\pi^2}{3}-2}\left\{ \int_0^1\frac{6 B^2(t) }{t(1-t)}\mathrm{d}t- 
\left[\int_0^1 6 B(t) \mathrm{d}t\right]^2\right\}\\
\label{eq:V_int}
&\ -\left[\frac{1}{\frac{\pi^2}{3}-2}\int_0^1 6 B(t) \ln \left(\frac{t}{1-t}\right)\mathrm{d}t\right]^2,
\end{align}
where the integrals exists with probability $1$ and $B(\cdot)$ denotes a standard Brownian bridge.
\end{theorem}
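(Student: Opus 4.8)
The plan is to obtain the theorem as a direct specialization of part~(2) of Cs\"org\H{o}'s theorem to the logistic distribution equipped with the weight $w(t)=6t(1-t)$. The whole argument hinges on one structural identity: since the logistic density satisfies $g(x)=G(x)\bigl(1-G(x)\bigr)$, we have $g(Q_G(t))=t(1-t)$, and differentiating this relation yields $g'(x)=g(x)\bigl(1-2G(x)\bigr)$, so that $g'(Q_G(t))=t(1-t)(1-2t)$. I would record these two identities at the outset, because under them every weighted quantity appearing in Cs\"org\H{o}'s formula collapses to something elementary.

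With these identities the two analytic hypotheses are immediate. For condition~\eqref{sup},
\[
\frac{t(1-t)\,|g'(Q_G(t))|}{g^2(Q_G(t))}
=\frac{t(1-t)\cdot t(1-t)\,|1-2t|}{t^2(1-t)^2}=|1-2t|\le 1,
\]
so the supremum is finite; for condition~\eqref{int} the integrand reduces to the constant $6$, giving $\int_0^1 \frac{t(1-t)}{g^2(Q_G(t))}\,w(t)\,\mathrm{d}t=6<\infty$. Substituting $\mu_1(G,w)=0$, $\nu(G,w)=\frac{\pi^2}{3}-2$, together with $w(t)/g^2(Q_G(t))=6/(t(1-t))$, $w(t)/g(Q_G(t))=6$ and $Q_G(t)w(t)/g(Q_G(t))=6\ln\bigl(t/(1-t)\bigr)$, into the general expression for $V$ then reproduces \eqref{eq:V_int} term by term.

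The substantive work lies in the two stochastic tail conditions~\eqref{stoc1} and~\eqref{stoc2}; by the symmetry of the logistic law it suffices to treat the left tail. I would use the bound $[Y_{1,n}-Q_G(t)]^2\le 2Y_{1,n}^2+2Q_G^2(t)$ and estimate the two resulting integrals separately. The deterministic piece $n\int_0^{1/(n+1)}Q_G^2(t)\,w(t)\,\mathrm{d}t$ has integrand of order $t\ln^2 t$ near the origin, so the integral is $O(n^{-2}\ln^2 n)$ and the factor $n$ leaves $O(n^{-1}\ln^2 n)\to 0$. For the random piece, $n\int_0^{1/(n+1)}w(t)\,\mathrm{d}t=O(n^{-1})$ because $w$ vanishes linearly at the endpoint, so the task reduces to controlling $n^{-1}Y_{1,n}^2$. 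The extreme-value behaviour of the logistic minimum is the key input: since $G(x)\sim e^{x}$ as $x\to-\infty$, a direct computation gives $P(Y_{1,n}\le -\ln n+y)\to 1-e^{-e^{y}}$, whence $Y_{1,n}=-\ln n+O_P(1)$ and $Y_{1,n}^2=O_P(\ln^2 n)$, so that $n^{-1}Y_{1,n}^2\,\smash{\mathop{\longrightarrow}\limits^{ P}}\,0$. This logarithmic growth of the extremes, damped by the linearly vanishing weight, is the heart of the proof and the step I expect to require the most care.

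Finally I would verify that the three limiting integrals are almost surely finite. The only delicate one is $\int_0^1 \frac{B^2(t)}{t(1-t)}\,\mathrm{d}t$: since $E\,B^2(t)=t(1-t)$, Tonelli gives $E\int_0^1 \frac{B^2(t)}{t(1-t)}\,\mathrm{d}t=\int_0^1 1\,\mathrm{d}t=1<\infty$, so this integral is finite with probability $1$ (the same conclusion follows from the Karhunen-Lo\`eve expansion of $Z(t)=B(t)/\sqrt{t(1-t)}$ cited from \cite{AD}). The integrals $\int_0^1 B(t)\,\mathrm{d}t$ and $\int_0^1 B(t)\ln\bigl(t/(1-t)\bigr)\,\mathrm{d}t$ are finite almost surely because $B$ is bounded on $[0,1]$ and $\ln\bigl(t/(1-t)\bigr)$ is integrable on $(0,1)$.
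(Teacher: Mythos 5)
Your proof is correct, and it shares the paper's overall skeleton: both arguments obtain Theorem \ref{thm:limiting_integral} by specializing part (2) of Cs\"org\H{o}'s theorem, both verify \eqref{sup} and \eqref{int} from the identities $g(Q_G(t))=t(1-t)$ and $g'(Q_G(t))=t(1-t)(1-2t)$, and both feed in the same extreme-value input, namely that the centered minimum $Y_{1,n}+\ln n$ converges in distribution (to the law $1-e^{-e^{x}}$) and is therefore stochastically bounded. The genuine difference is in how the tail conditions \eqref{stoc1}--\eqref{stoc2} are verified, which is the substantive content of the paper's proof. The paper keeps the centering inside the square, expands $M_{1,n}$ \emph{exactly} as a quadratic in $A_n=Y_{1,n}+\ln n$ whose coefficients are the integrals $n\int_0^{1/(n+1)}\ln^k\bigl(n\,t/(1-t)\bigr)\,t(1-t)\,\mathrm{d}t$, and evaluates these by the appendix Lemma \ref{int_asymp}, yielding $M_{1,n}=A_n^2\,O(1/n)+A_n\,O(1/n)+O(1/n)$; the maximum is then treated by a parallel expansion in $B_n=Y_{n,n}-\ln n$ after the substitution $t\mapsto 1-t$. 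You instead apply the crude bound $[Y_{1,n}-Q_G(t)]^2\le 2Y_{1,n}^2+2Q_G^2(t)$, which decouples the extreme from the integrand: the deterministic piece is $O(n^{-1}\ln^2 n)$ by direct integration of $t\ln^2 t$ near $0$, and the random piece is $O_P(n^{-1}\ln^2 n)$ since $Y_{1,n}^2=O_P(\ln^2 n)$ while $n\int_0^{1/(n+1)}w(t)\,\mathrm{d}t=O(1/n)$. This dispenses with Lemma \ref{int_asymp} entirely, and your reduction of \eqref{stoc2} to \eqref{stoc1} by the distributional symmetry $M_{n,n}\mathop{=}\limits^{\D} M_{1,n}$ (legitimate because the limit is the constant $0$, so convergence in probability is a distributional statement) removes the second computation as well; the price is a lost logarithm in the rate, which is immaterial since only convergence to zero is required. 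You also explicitly verify the almost sure finiteness of the limiting integrals via Tonelli, $E\int_0^1 B^2(t)/\bigl(t(1-t)\bigr)\,\mathrm{d}t=1<\infty$, a point the theorem statement asserts but the paper's proof leaves implicit. In short, the paper's route buys sharp $O(1/n)$ rates with explicit constants (at the cost of an appendix lemma), while yours buys brevity and elementarity at no cost to the conclusion.
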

\begin{rem}
If $F\in {\mathcal G}_{l}$ the theorem of Cs\"org\H o gives
\be
n W_n\;\smash{\mathop{\longrightarrow}\limits^{\mathcal D}}\; W\,,
\ee
where 
\be
\label{eq:W_int}
 W = \int_0^1\frac{6 B^2(t) }{t(1-t)}\mathrm{d}t- \left[\int_0^1 6 B(t) \mathrm{d}t\right]^2\ ,
\ee
in agreement with \cite{W2}.
\end{rem}
To proceed with the proof of Theorem \ref{thm:limiting_integral}, we need the following lemma.
\begin{lemma} The asymptotic relation
\label{int_asymp}
\be
n\int_{0}^{\frac{1}{n+1}}\ln^k\left(n\frac{t}{1-t}\right)t(1-t)\mathrm{d}t =
 (-1)^k\frac{k!}{2^{k+1}}\frac{1}{n}+{\mathcal O}\left(\frac{1}{n^2}\right)
\ee
holds for all integers $k \geq 0$.
\end{lemma}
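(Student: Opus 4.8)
The plan is to find a change of variables that both straightens the moving endpoint and simplifies the logarithm. Observing that the awkward quantity is $n\,t/(1-t)$, I would set $u=n\,t/(1-t)$, i.e. $t=u/(n+u)$. Then the lower limit $t=0$ corresponds to $u=0$ and the upper limit $t=1/(n+1)$ corresponds exactly to $u=1$. A short computation gives $t(1-t)=nu/(n+u)^2$ and $\mathrm{d}t=n\,(n+u)^{-2}\mathrm{d}u$, so the left-hand side transforms into
\be
n\int_0^1 \ln^k(u)\,\frac{n^2 u}{(n+u)^4}\,\mathrm{d}u=\frac{1}{n}\int_0^1\frac{u\,\ln^k(u)}{(1+u/n)^4}\,\mathrm{d}u.
\ee

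Next I would separate the leading behaviour by expanding the factor $(1+u/n)^{-4}$. For all $u\in[0,1]$ and $n\ge 1$ one has the uniform bound $\bigl|(1+u/n)^{-4}-1\bigr|\le C\,u/n$ for some absolute constant $C$. Because the weight $u^2|\ln^k u|$ is integrable on $[0,1]$, replacing the bracketed factor by $1$ introduces an error of size $\mathcal{O}(1/n^2)$, leaving the main term $\frac{1}{n}\int_0^1 u\,\ln^k(u)\,\mathrm{d}u$.

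It then remains to evaluate this elementary integral. The substitution $u=e^{-v}$ turns it into $(-1)^k\int_0^\infty v^k e^{-2v}\,\mathrm{d}v$, and the standard Gamma integral $\int_0^\infty v^k e^{-av}\,\mathrm{d}v=k!/a^{k+1}$ with $a=2$ yields $(-1)^k k!/2^{k+1}$. This delivers the asserted coefficient uniformly for every integer $k\ge0$.

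The calculation is routine once the substitution $t=u/(n+u)$ is in hand; the one place demanding care is the control of the remainder. I would make sure the estimate for $(1+u/n)^{-4}-1$ is genuinely uniform on all of $[0,1]$ so that it can be factored out of the integral, and check that $\int_0^1 u^2|\ln^k u|\,\mathrm{d}u<\infty$ — both consequences of $u\,\ln^k u\to0$ as $u\to0^+$. This guarantees the error is truly $\mathcal{O}(1/n^2)$ rather than something larger.
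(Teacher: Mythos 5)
Your proposal is correct and follows essentially the same route as the paper: your single substitution $t=u/(n+u)$ is just the composition of the paper's two steps ($x=t/(1-t)$ followed by $y=nx$), and you arrive at the identical decomposition $\frac{1}{n}\int_0^1 u\,\ln^k u\,\mathrm{d}u$ plus an $\mathcal{O}(1/n^2)$ remainder from the factor $(1+u/n)^{-4}$, with your uniform bound $\lvert(1+u/n)^{-4}-1\rvert\le Cu/n$ making explicit what the paper leaves implicit. The only cosmetic difference is that you evaluate $\int_0^1 u\,\ln^k u\,\mathrm{d}u$ via $u=e^{-v}$ and the Gamma integral, whereas the paper uses the integration-by-parts recursion $\int_0^1 u\,\ln^k u\,\mathrm{d}u=-\frac{k}{2}\int_0^1 u\,\ln^{k-1}u\,\mathrm{d}u$; both give $(-1)^k k!/2^{k+1}$.
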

The proof of this statement is postponed to the Appendix.
\begin{proof}[Proof of Theorem \ref{thm:limiting_integral}]
In order to prove the above convergence results, we need to verify the conditions
\eqref{sup} -- \eqref{stoc2}. 
Since
\be
g(Q_G(t)) = t(1-t)\ ,\qquad g'(Q_G(t)) = t(1-t)(1-2t)\ ,
\ee
conditions \eqref{sup} and \eqref{int} are satisfied.To conclude the proof we need to show that 
\[
M_{1,n}:= n \int_0^{\frac{1}{n+1}}\left[Y_{1,n}-\ln\left(\frac{t}{1-t}\right)\right]^2 6t(1-t)\mathrm{d}t\;
\smash{\mathop{\longrightarrow}\limits^{ P}}\; 0\,,
\]  
and 
\[
M_{n,n}:= n \int_{\frac{n}{n+1}}^1\left[Y_{n,n}-\ln\left(\frac{t}{1-t}\right)\right]^2 6t(1-t)\mathrm{d}t\;
\smash{\mathop{\longrightarrow}\limits^{ P}}\; 0\,,
\] 
where $Y_{1,n}\le Y_{2,n}\le\cdots\le Y_{n,n}$ is the order statistics from $G$.

An elementary calculation shows that the sequence of random variables
\be
A_n:= Y_{1,n}+\ln n \quad n=1,2,\dots
\ee
converges in distribution to $Y^*$, where 
\be
P(Y^*\leq x)=1-e^{-e^x}\ .
\ee
Therefore $A_n$ is stochastically bounded.
Hence we obtain
\begin{align}
M_{1,n}&= n \int_0^{\frac{1}{n+1}} \left[Y_{1,n}+\ln n-\left(\ln n+\ln\frac{t}{1-t}\right)\right]^2 6t(1-t)\mathrm{d}t\\
\nonumber
&=6A_n^2 n\int_{0}^{\frac{1}{n+1}}t(1-t)dt-12A_nn\int_{0}^{\frac{1}{n+1}}\ln\left(n\frac{t}{1-t}\right)t(1-t)\mathrm{d}t\\
\nonumber
&\ +6n\int_{0}^{\frac{1}{n+1}}\ln^2\left(n\frac{t}{1-t}\right)t(1-t)\mathrm{d}t \\
&=A_n^2\left(\frac{3}{n}+{\mathcal O}\left(\frac{1}{n^2}\right)\right)+A_n\left(\frac{3}{n}+{\mathcal O}
\left(\frac{1}{n^2}\right)\right) +\frac{3}{2n}+{\mathcal O}\left(\frac{1}{n^2}\right)\ ,
\end{align}
by Lemma \ref{int_asymp}, which shows that $M_{1,n}$ converges to $0$ in probability.

Similarly, 
\be
B_n:= Y_{n,n}-\ln n\smash{\mathop{\longrightarrow}\limits^{\mathcal D}}\;Y^{**}\ ,
\ee
where $P(Y^{**}\leq x)=e^{-e^{-x}}$, 
thus $B_n$ is stochastically bounded.
Notice that by the substitution $t \to 1-t$\ ,
\[
n\int_{\frac{n}{n+1}}^{1}\ln^k\left(n\frac{1-t}{t}\right)t(1-t)\mathrm{d}t = 
(-1)^k n\int_{0}^{\frac{1}{n+1}}\ln^k\left(n\frac{t}{1-t}\right)t(1-t)\mathrm{d}t\ .
\]
As above, we have 
\begin{align}
M_{n,n}&= n \int_{\frac{n}{n+1}}^1 \left[Y_{n,n}-\ln n+\left(\ln n-\ln\frac{t}{1-t}\right)\right]^2 6t(1-t)\mathrm{d}t\\
&=B_n^2\left(\frac{3}{n}+{\mathcal O}\left(\frac{1}{n^2}\right)\right)-B_n\left(\frac{3}{n}+{\mathcal O}
\left(\frac{1}{n^2}\right)\right)
 +\frac{3}{2n}+{\mathcal O}\left(\frac{1}{n^2}\right)\ ,
\end{align}
and therefore $M_{n,n}$ converges to $0$ in probability also, that concludes the proof.
\end{proof}


\section{Infinite series representations of the limiting distributions}
\label{sec:series}

The integral representations \eqref{eq:V_int} of the limiting distribution 
suggest that the Karhunen-Lo\`eve expansion of the weighted Brownian bridge \eqref{weighted_bridge},
once calculated, can be used to obtain an infinite series representation 
of the asymptotic distribution (see e.g. \cite{AG}).

Note that the covariance function
\be
\label{eq:cov_kernel}
K(s,t):= \textrm{Cov} (Z(s),Z(t)) = \frac{\min(s,t)-s t}{\sqrt{t(1-t)s(1-s)}}
\ee
of the process $Z(t)$ belongs to $L^2\left((0,1)^2\right)$, but it is not continuous
on the closed unit square $[0,1]^{2}$. A suitable extension to the standard results on 
integral operators with continuous kernels is employed in \cite{AD}
to treat the integral kernel \eqref{eq:cov_kernel} as a special example 
(for a more general setting, compare with \cite{DM}). 
It is shown in \cite{AD} that the stochastic process
\be
\label{eq:psi_B}
Z(t) =\frac{1}{\sqrt{t(1-t)}}B(t) \qquad 0 < t <1
\ee
admits the Karhunen-Lo\`eve expansion
\be
\label{eq:KL_series} 
Z(t)=\sum_{k=1}^{\infty} \sqrt{\lambda_k}Z_k f_k(t) \ ,
 \ee
where the normalized eigenfunctions $f(t)=f_k(t)$ can be written in the form
\be
f(t) = \frac{y(t)}{\sqrt{t(1-t)}} \ ,
\ee
where $y(t)$ solves the differential equation
\be
\label{eq:de}
y''(t)+\frac{1}{\lambda}\frac{1}{t(1-t)}y(t)=0
\ee
with boundary conditions
\be
\label{eq:bc}
y(0)=0\quad \mbox{and}\quad y(1)=0\ ,
\ee
corresponding to the nonzero eigenvalues $\lambda=\lambda_k$ of the associated integral operator. 
The random coefficients are
\be
Z_k=\frac{1}{\sqrt{\lambda_k}}\int_0^1 \frac{B(t)}{\sqrt{t(1-t)}} f_k(t)\mathrm{d}t, \quad k=1,2\ldots\ .
\ee

By substituting $t=\frac{x+1}{2}$ and $u(x)=y(\frac{x+1}{2})$,  
the differential equation \eqref{eq:de} is brought to
\be
\label{eq:de_jacobi}
u''(x)+\frac{1}{\lambda}\frac{1}{1-x^2} u(x)=0 \qquad -1<x<1\ ,
\ee
 which is of the form of the  \emph{Jacobi equation} with parameters $\alpha=1$ and $\beta=1$
 , and the boundary conditions
\be
\label{eq:bc_jacobi}
 u(-1)=0 \quad \mbox{and}\quad u(1)=0
\ee
restrict the values of $\lambda$ to be
\be
\label{eq:eigenvalues}
\lambda_k=\frac{1}{k(k+1)}\qquad k=1,2,\dots
\ee
(see \cite{abramowitz_stegun}, 22.6.2).
The full set of solutions to the boundary-value problem \eqref{eq:de_jacobi}-\eqref{eq:bc_jacobi} 
can be written in terms of the \emph{Jacobi orthogonal polynomials}\footnote{
In \cite{AD}, the normalized eigenfunctions are written in terms of the 
\emph{Ferrer associated Legendre polynomials} $P_k^{1}(x)$ (see \cite{WW},
p.~323); for our purposes it is more convenient to express them through 
the Jacobi polynomials $P_k^{(1,1)}(x)$, as suggested in \cite{W73}.}
 $P_k^{(1,1)}(x)$ (see \cite{abramowitz_stegun}, 22.6.2). 
 Therefore the original boundary-value problem \eqref{eq:de}-\eqref{eq:bc} gives the eigenfunctions
\be
\label{eq:eigenfunctions}
f_k(t) =\sqrt{\frac{(2k+1)(k+1)}{k}}P_{k-1}^{(1,1)}(2t-1)\sqrt{t(1-t)} \quad k=1,2,\dots
\ee
associated to the eigenvalues \eqref{eq:eigenvalues}. The normalization in \eqref{eq:eigenfunctions} is chosen so that
\be
\int_{0}^{1}f_k(t)f_l(t)\mathrm{d}t = \delta_{kl} \qquad k,l =1,2,\dots
 \ee
(see \cite{abramowitz_stegun}, 22.2.1).

Since $Z(t)$ is a Gaussian process and 
\be
E(Z_n Z_m)=\delta_{n,m} \qquad n,m=1,2,\dots
\ee
it follows that the variables $Z_n$ are independent standard normal random variables.

Given the Karhunen-Lo\`eve expansion of the weighted Brownian bridge \eqref{weighted_bridge}, 
the integral representation \eqref{eq:V_int} of the limiting distribution $V$ possesses 
the following infinite series representation.

\begin{theorem} 
\label{thm:limiting_series}
The limiting distribution $V$ can be represented alternatively as
\be
\label{eq:V_series}
V \ \mathop{=}\limits^{\D}\ \frac{1}{\frac{\pi^2}{3}-2}\sum _{k=2}^\infty \frac{6}{k(k+1)}Z_m^2
\ -\left[\frac{1}{\frac{\pi^2}{3}-2}\sum_{l=1}^\infty \frac{3\sqrt{4l+1}}{l(l+1)(2l-1)(2l+1)}Z_{2l}\right]^2\ ,
\ee
where $\{Z_m\}_{m=1}^{\infty}$ is an infinite sequence of independent
identically distributed standard normal random variables, and the series converges with probability one.
\end{theorem}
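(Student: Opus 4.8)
The plan is to insert the Karhunen-Lo\`eve expansion \eqref{eq:KL_series} into the integral representation \eqref{eq:V_int} and to evaluate the three functionals of $B$ appearing there against the orthonormal eigenfunctions $f_k$. Writing $Z(t)=B(t)/\sqrt{t(1-t)}$, these become $\int_0^1 \frac{6B^2(t)}{t(1-t)}\,\mathrm{d}t = 6\int_0^1 Z^2(t)\,\mathrm{d}t$, $\int_0^1 6B(t)\,\mathrm{d}t = 6\int_0^1 \sqrt{t(1-t)}\,Z(t)\,\mathrm{d}t$ and $\int_0^1 6B(t)\ln\frac{t}{1-t}\,\mathrm{d}t = 6\int_0^1 \sqrt{t(1-t)}\,\ln\frac{t}{1-t}\,Z(t)\,\mathrm{d}t$. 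Since $\{f_k\}$ is a complete orthonormal system and \eqref{eq:KL_series} converges in $L^2$, Parseval's identity gives $6\int_0^1 Z^2(t)\,\mathrm{d}t = \sum_{k=1}^\infty \frac{6}{k(k+1)}Z_k^2$ by the eigenvalues \eqref{eq:eigenvalues}, while each linear functional $\int_0^1 h(t)Z(t)\,\mathrm{d}t$ reduces to $\sum_k \sqrt{\lambda_k}\,Z_k\,\langle h,f_k\rangle$ for $h(t)=\sqrt{t(1-t)}$ and $h(t)=\sqrt{t(1-t)}\ln\frac{t}{1-t}$, both of which lie in $L^2(0,1)$ (the second by finiteness of $\mu_2(G,w)$).

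I would then evaluate the two families of coefficients after the substitution $x=2t-1$, which via \eqref{eq:eigenfunctions} turns $f_k$ into the Jacobi polynomial $P_{k-1}^{(1,1)}(x)$ against the weight $1-x^2$. For $h=\sqrt{t(1-t)}$ the integrand is proportional to $P_{k-1}^{(1,1)}(x)(1-x^2)$, so orthogonality of the $P_n^{(1,1)}$ forces $\langle h,f_k\rangle=0$ for $k\ge2$ and leaves only $k=1$; a short computation then yields $\int_0^1 6B(t)\,\mathrm{d}t=\sqrt3\,Z_1$, hence $\big[\int_0^1 6B\big]^2=3Z_1^2$, which cancels exactly the $k=1$ summand $\frac{6}{1\cdot2}Z_1^2=3Z_1^2$ of the Parseval sum and leaves $\sum_{k=2}^\infty \frac{6}{k(k+1)}Z_k^2$. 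For $h=\sqrt{t(1-t)}\ln\frac{t}{1-t}$ the extra factor $\ln\frac{1+x}{1-x}$ is odd in $x$, whereas $P_{k-1}^{(1,1)}$ has parity $(-1)^{k-1}$ and $1-x^2$ is even, so the integrand is even only when $k$ is even; this parity argument is what restricts the surviving coefficients to the even indices $k=2l$, producing the variables $Z_{2l}$ in \eqref{eq:V_series}.

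The main obstacle is the closed-form evaluation of $\langle h,f_{2l}\rangle$. The plan is to use the classical identity $P_{k-1}^{(1,1)}(x)=\tfrac{2}{k+1}P_k'(x)$ relating Jacobi polynomials to derivatives of Legendre polynomials, specialized to $k=2l$, and then integrate by parts; the boundary terms vanish because $(1-x^2)\ln\frac{1+x}{1-x}\to0$ as $x\to\pm1$. Differentiating $(1-x^2)\ln\frac{1+x}{1-x}=2-2x\ln\frac{1+x}{1-x}$ reduces the task to $\int_{-1}^1 x P_{2l}(x)\ln\frac{1+x}{1-x}\,\mathrm{d}x$, which the three-term recurrence $xP_{2l}=\frac{2l+1}{4l+1}P_{2l+1}+\frac{2l}{4l+1}P_{2l-1}$ expresses through $K_m:=\int_{-1}^1 P_m(x)\ln\frac{1+x}{1-x}\,\mathrm{d}x$ for odd $m$. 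This last integral I would evaluate to $K_m=\frac{4}{m(m+1)}$ by a second integration by parts, using $\int P_m=\frac{P_{m+1}-P_{m-1}}{2m+1}$ together with $(1-x^2)P_m'=\frac{m(m+1)}{2m+1}(P_{m-1}-P_{m+1})$. Collecting all the constants then gives $6\sqrt{\lambda_{2l}}\,\langle h,f_{2l}\rangle=\frac{3\sqrt{4l+1}}{l(l+1)(2l-1)(2l+1)}$, which is precisely the coefficient in the subtracted bracket of \eqref{eq:V_series}.

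Finally I would establish the almost-sure convergence asserted in the theorem. The first series has independent nonnegative terms with $\sum_{k\ge2}E\big[\frac{6}{k(k+1)}Z_k^2\big]=\sum_{k\ge2}\frac{6}{k(k+1)}<\infty$, so by monotone convergence it converges almost surely. In the second series the coefficients are $O(l^{-7/2})$ and hence square-summable, so the series of independent centred Gaussian terms converges almost surely (and in $L^2$) by Kolmogorov's one-series theorem. Since both expressions are built from the same Brownian bridge through the $L^2$-convergent expansion \eqref{eq:KL_series}, the resulting series agrees with the integral representation \eqref{eq:V_int}, which yields \eqref{eq:V_series}.
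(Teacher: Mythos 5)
Your proof is correct, and its outer skeleton coincides with the paper's: you insert the Karhunen--Lo\`eve expansion \eqref{eq:KL_series} into \eqref{eq:V_int}, use orthonormality/Parseval to obtain $\sum_{k}\frac{6}{k(k+1)}Z_k^2$, note that $f_1(t)\propto\sqrt{t(1-t)}$ so that $\bigl[\int_0^1 6B(t)\,\mathrm{d}t\bigr]^2=3Z_1^2$ cancels the $k=1$ term, and reduce the remaining coefficients via $x=2t-1$ to integrals of $P_{k-1}^{(1,1)}(x)(1-x^2)\ln\frac{1+x}{1-x}$. Where you genuinely diverge is in evaluating these integrals, i.e.\ the content of the paper's Lemma \ref{jacobi_integrals}: the paper computes the entire generating function $\sum_n a_n z^n$ from \eqref{gen_f} via an Euler substitution, partial fractions, and a Laurent expansion at $z=0$ (including a branch-choice and removable-singularity discussion), whereas you use the classical identity $P_{k-1}^{(1,1)}=\frac{2}{k+1}P_k'$, a single integration by parts (the boundary terms vanish since $(1-x^2)\ln\frac{1+x}{1-x}\to 0$ at $\pm1$; note your line ``differentiating $(1-x^2)\ln\frac{1+x}{1-x}=2-2x\ln\frac{1+x}{1-x}$'' is a harmless notational slip --- the \emph{derivative} equals $2-2x\ln\frac{1+x}{1-x}$), the three-term recurrence, and the integral $K_m=\int_{-1}^1 P_m(x)\ln\frac{1+x}{1-x}\,\mathrm{d}x=\frac{4}{m(m+1)}$ for odd $m$, obtained by a second integration by parts with $(1-x^2)P_m'=\frac{m(m+1)}{2m+1}(P_{m-1}-P_{m+1})$. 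I verified the constants: your chain yields $a_{2l-1}=\frac{8}{(2l-1)(2l+1)(l+1)}$ and hence exactly the coefficient $\frac{3\sqrt{4l+1}}{l(l+1)(2l-1)(2l+1)}$ of \eqref{eq:V_series}; your parity argument likewise replaces the paper's generating-function proof that the even-index integrals vanish. Your route is more elementary and self-contained (only standard Legendre identities, no complex-analytic apparatus), while the paper's method delivers all coefficients at once in closed form; you also add explicit almost-sure convergence arguments (monotone convergence for the $\chi^2$-type series and Kolmogorov's series theorem for the Gaussian series, whose coefficients are $O(l^{-7/2})$ and hence square-summable), a point the paper asserts but does not spell out.
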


\begin{rem}
The integral representation \eqref{eq:W_int} of $W$ for the logistic location family implies the series expansion
\be
W\ \mathop{=}\limits^{\D}\  \sum_{k=2}^\infty \frac{6}{k(k+1)}Z_k^2\ ,
\ee
where $\{Z_m\}_{m=1}^{\infty}$ is an infinite sequence of independent identically distributed 
standard normal random variables, and the series converges with probability one, as shown in \cite{W2}.
\end{rem}
We need the following lemma to determine the coefficients in the infinite series representations of $V$.
\begin{lemma} 
\label{jacobi_integrals}
The following formula is valid:
\be
\int_{-1}^1 P_{n}^{(1,1)}(x)(1-x^2)\ln\left(\frac{1+x}{1-x}\right)dx=
\left\{
\begin{array}{cc}
\displaystyle\frac{8}{(2k+1)(2k+3)(k+2)} & n=2k+1\\
\displaystyle 0 & n=2k\ .
\end{array}
\right.
\ee
\end{lemma}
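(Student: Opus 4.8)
The plan is to collapse this Jacobi integral onto Legendre polynomials, where the logarithmic weight can be removed by the Legendre differential equation.

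First I would dispose of the even case by parity. Since $\alpha=\beta=1$, one has $P_{2k}^{(1,1)}(-x)=P_{2k}^{(1,1)}(x)$, while $(1-x^2)$ is even and $\ln\frac{1+x}{1-x}$ is odd; the integrand is therefore odd and its integral over $[-1,1]$ vanishes. Observe also that $(1-x^2)\ln\frac{1+x}{1-x}\to0$ as $x\to\pm1$, so every integral occurring below is proper and all boundary terms in the integrations by parts will vanish.

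For odd $n$ I would first trade the Jacobi polynomial for a Legendre derivative via the standard relation $\frac{d}{dx}P_{n+1}(x)=\frac{n+2}{2}P_n^{(1,1)}(x)$, that is
\be
P_n^{(1,1)}(x)=\frac{2}{n+2}P_{n+1}'(x),
\ee
with $P_{n+1}$ the Legendre polynomial. Inserting this and integrating by parts (with $dv=P_{n+1}'\,dx$, $v=P_{n+1}$), then using $\frac{d}{dx}\big[(1-x^2)\ln\frac{1+x}{1-x}\big]=-2x\ln\frac{1+x}{1-x}+2$ together with $\int_{-1}^1 P_{n+1}=0$, the integral reduces to
\be
I_n:=\int_{-1}^1 P_n^{(1,1)}(x)(1-x^2)\ln\frac{1+x}{1-x}\,dx=\frac{4}{n+2}\int_{-1}^1 x P_{n+1}(x)\ln\frac{1+x}{1-x}\,dx.
\ee
I would then apply the three-term recurrence $xP_{n+1}=\frac{n+2}{2n+3}P_{n+2}+\frac{n+1}{2n+3}P_n$ to express the right-hand side through the elementary integrals $L_m:=\int_{-1}^1 P_m(x)\ln\frac{1+x}{1-x}\,dx$.

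The crux is the closed form of $L_m$, and the neat device is the Legendre equation itself: writing $P_m=-\frac{1}{m(m+1)}\frac{d}{dx}\big[(1-x^2)P_m'\big]$ and integrating by parts once, the factor $\frac{d}{dx}\ln\frac{1+x}{1-x}=\frac{2}{1-x^2}$ cancels the $(1-x^2)$, leaving
\be
L_m=\frac{2}{m(m+1)}\int_{-1}^1 P_m'(x)\,dx=\frac{2}{m(m+1)}\big[P_m(1)-P_m(-1)\big],
\ee
so that $L_m=\frac{4}{m(m+1)}$ for odd $m$ and $L_m=0$ for even $m$ (which also re-proves the parity statement). Substituting $L_{2k+3}$ and $L_{2k+1}$ into the recurrence for $n=2k+1$, the two contributions combine so that the factor $4k+5$ telescopes away, and a short simplification using $2k+4=2(k+2)$ yields the claimed value $\frac{8}{(2k+1)(2k+3)(k+2)}$. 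I expect the only delicate points to be the justification that the boundary terms vanish (guaranteed by $(1-x^2)\ln\frac{1+x}{1-x}\to0$) and careful index bookkeeping in the recurrence; the genuinely clever step is the Legendre-ODE evaluation of $L_m$, which converts a log-weighted integral into the trivial difference $P_m(1)-P_m(-1)$.
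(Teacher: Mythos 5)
Your proof is correct, and it takes a genuinely different route from the paper. The paper proceeds via the generating function $\sum_{n\ge 0} P_n^{(1,1)}(x)z^n = \frac{4}{R(1-z+R)(1+z+R)}$, $R=\sqrt{1-2zx+z^2}$: it evaluates $\int_{-1}^1 \frac{4f(x)}{R(1-z+R)(1+z+R)}\,\mathrm{d}x$ in closed form using the Euler substitution $u=\sqrt{1-2zx+z^2}$ and a partial fraction decomposition, then reads off all the coefficients $a_n$ at once from the Taylor expansion of the resulting explicit function $\frac{4}{3z^3}\bigl[4z^2+(z^3-3z)\log\frac{1+z}{1-z}-2\log((1+z)(1-z))\bigr]$; the delicate points there are the branch choice for $R$ and justifying the interchange of sum and integral. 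You instead stay entirely within classical orthogonal-polynomial identities: the derivative relation $P_n^{(1,1)}(x)=\frac{2}{n+2}P_{n+1}'(x)$ (which is the correct specialization of $\frac{d}{dx}P_m^{(\alpha,\beta)}=\frac{m+\alpha+\beta+1}{2}P_{m-1}^{(\alpha+1,\beta+1)}$), one integration by parts made legitimate by $f(\pm 1)=0$, the three-term recurrence, and the evaluation $L_m=\frac{2}{m(m+1)}\bigl[P_m(1)-P_m(-1)\bigr]$ from Legendre's equation, whose factor $\frac{2}{1-x^2}$ exactly cancels the weight $(1-x^2)$ -- this last step is indeed the clever move. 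I verified the bookkeeping you left implicit: for $n=2k+1$, $I_n=\frac{4}{n+2}\bigl[\frac{n+2}{2n+3}L_{n+2}+\frac{n+1}{2n+3}L_n\bigr]$ with $L_{2k+3}=\frac{4}{(2k+3)(2k+4)}$ and $L_{2k+1}=\frac{4}{(2k+1)(2k+2)}$ combines to $\frac{16}{(2k+3)(4k+5)}\cdot\frac{(2k+1)+(2k+4)}{(2k+1)(2k+4)}=\frac{8}{(2k+1)(2k+3)(k+2)}$, so the factor $4k+5$ cancels exactly as you claimed, and the parity argument for even $n$ (using $P_{2k}^{(1,1)}(-x)=P_{2k}^{(1,1)}(x)$) is sound and is independently confirmed by $L_m=0$ for even $m$. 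Your argument is shorter and more elementary than the paper's -- no branch cuts, no summation--integration interchange, only absolutely convergent integrals with an integrable logarithmic singularity -- while the generating-function method of the paper buys a single closed-form expression encoding all coefficients simultaneously, a technique that would extend more readily to other weights against the same Jacobi family.
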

Since the proof of this lemma is quite technical, it is left to the Appendix.

\begin{proof}[Proof of Theorem \ref{thm:limiting_series}]
The Karhunen-Lo\`eve expansion \eqref{eq:KL_series} can be used 
to evaluate the integrals in \eqref{eq:W_int} in terms of the random coefficients $Z_k$, as shown below.
\begin{align}
\int_0^1\frac{B^2(t) }{t(1-t)}\mathrm{d}t&=\sum_{k,l=1}^{\infty}\sqrt{\lambda_k\lambda_l}
\int_{0}^{1}Z_kZ_l f_k(t)f_l(t)\mathrm{d}t\\
&=\sum_{k=1}^{\infty}\frac{1}{k(k+1)}Z_k^2\ .
\end{align}
Since $f_1(t) = \sqrt{6}\sqrt{t(1-t)}$, we have
\begin{align}
\left[\int_0^1B(t)\mathrm{d}t\right]^2&=
\left[\sum_{k=1}^{\infty}\sqrt{\lambda_k}Z_k\int_{0}^{1}f_k(t)\sqrt{t(1-t)}\mathrm{d}t\right]^2\\
&=\frac{1}{6}\left[\sum_{k=1}^{\infty}\sqrt{\lambda_k}Z_k\int_{0}^{1}f_k(t)f_1(t)\mathrm{d}t\right]^2\\
&=\frac{1}{6}\lambda_1 Z_1^2\ .
\end{align}
Hence
\begin{align}
\int_{0}^{1}B(t) \ln \left(\frac{t}{1-t}\right)\mathrm{d}t &=\sum_{k=1}^{\infty} \sqrt{\lambda_k}Z_k \int_{0}^{1}f_k(t)\sqrt{t(1-t)}\ln \left(\frac{t}{1-t}\right)\mathrm{d}t\\
&=\sum_{k=1}^{\infty} \sqrt{\lambda_k}Z_k \sqrt{\frac{(2k+1)(k+1)}{k}}\frac{1}{8}\int_{-1}^{1}P_{k-1}^{(1,1)}(x)(1-x^2)\ln \left(\frac{1+x}{1-x}\right)\mathrm{d}x\\
&=\sum_{l=1}^{\infty} \frac{\sqrt{4l+1}}{2l(l+1)(2l-1)(2l+1)}Z_{2l}\ ,
\end{align}
where, in the last equality, we have used Lemma \ref{jacobi_integrals}.
Combining the above results with the appropriate constant coefficients, 
the infinite series representation \eqref{eq:V_series} for the distribution of $V$ follows.
\end{proof}


\section{Performance of tests}
\label{sec:sim}

\subsection{The asymptotic distributions and the distributions of the tests $nV_n$ and $nW_n$}

 The distribution functions of the limiting random variables above
 are computed numerically by simulation, using their infinite series representations. 
We generated $200\,000$ copies of the random variable
$W$ and $V$, and we computed numerically their empirical
distribution function $H_l$ and $H_{l,s}$, respectively, each time truncating the series at $10\,000$. 
The parameters were chosen such that the values
of $H_l$ and $H_{l,s}$ be the same to two decimal places for different samples, respectively. 
The asymptotic distributions are shown in Fig. 1.

\begin{figure}[!hbp]
\begin{minipage}[b]{0.45\linewidth} 
\centering
\includegraphics[width=7cm]{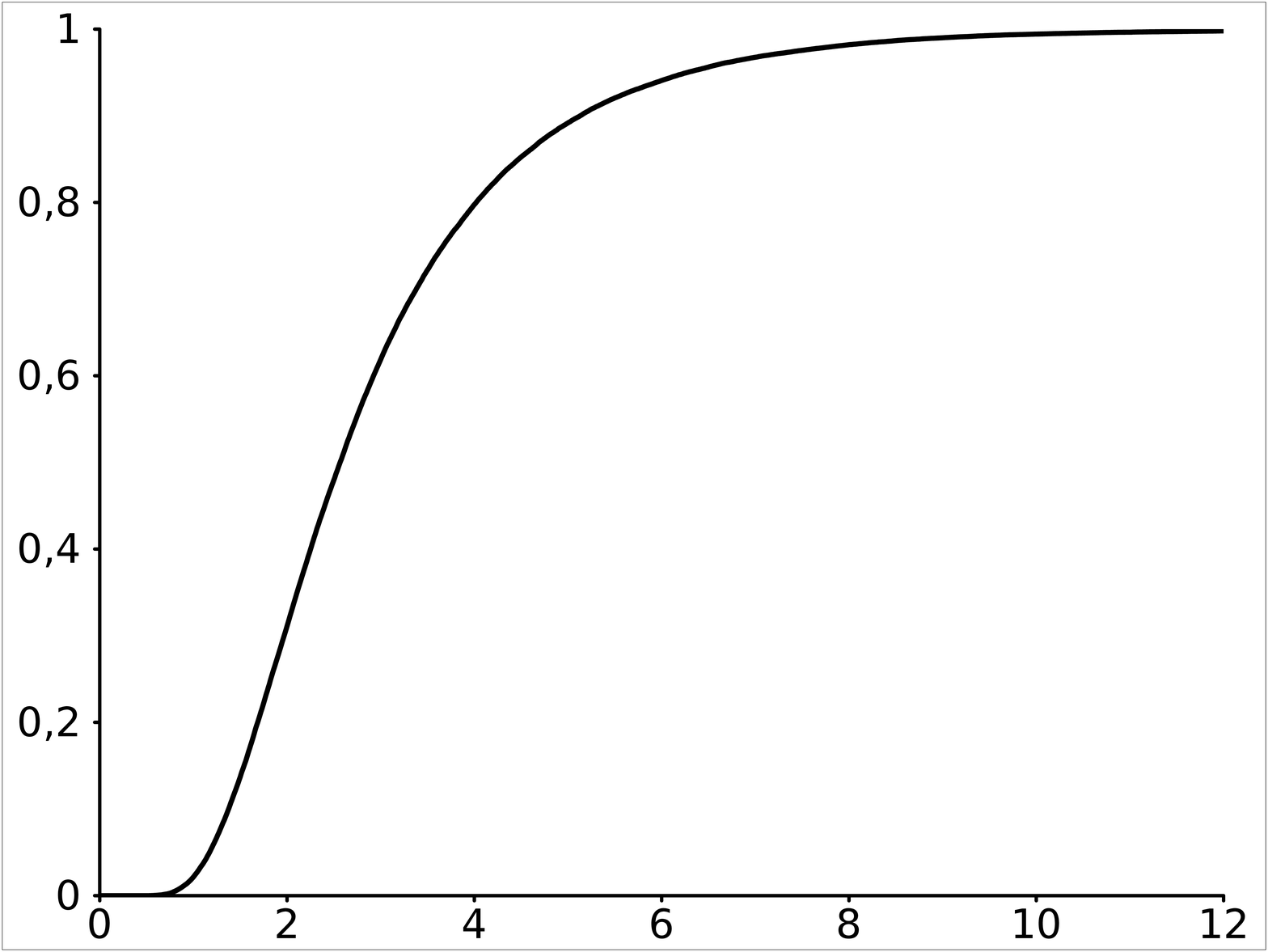}
\end{minipage}
\begin{minipage}[b]{0.45\linewidth} 
\centering
\includegraphics[width=7cm]{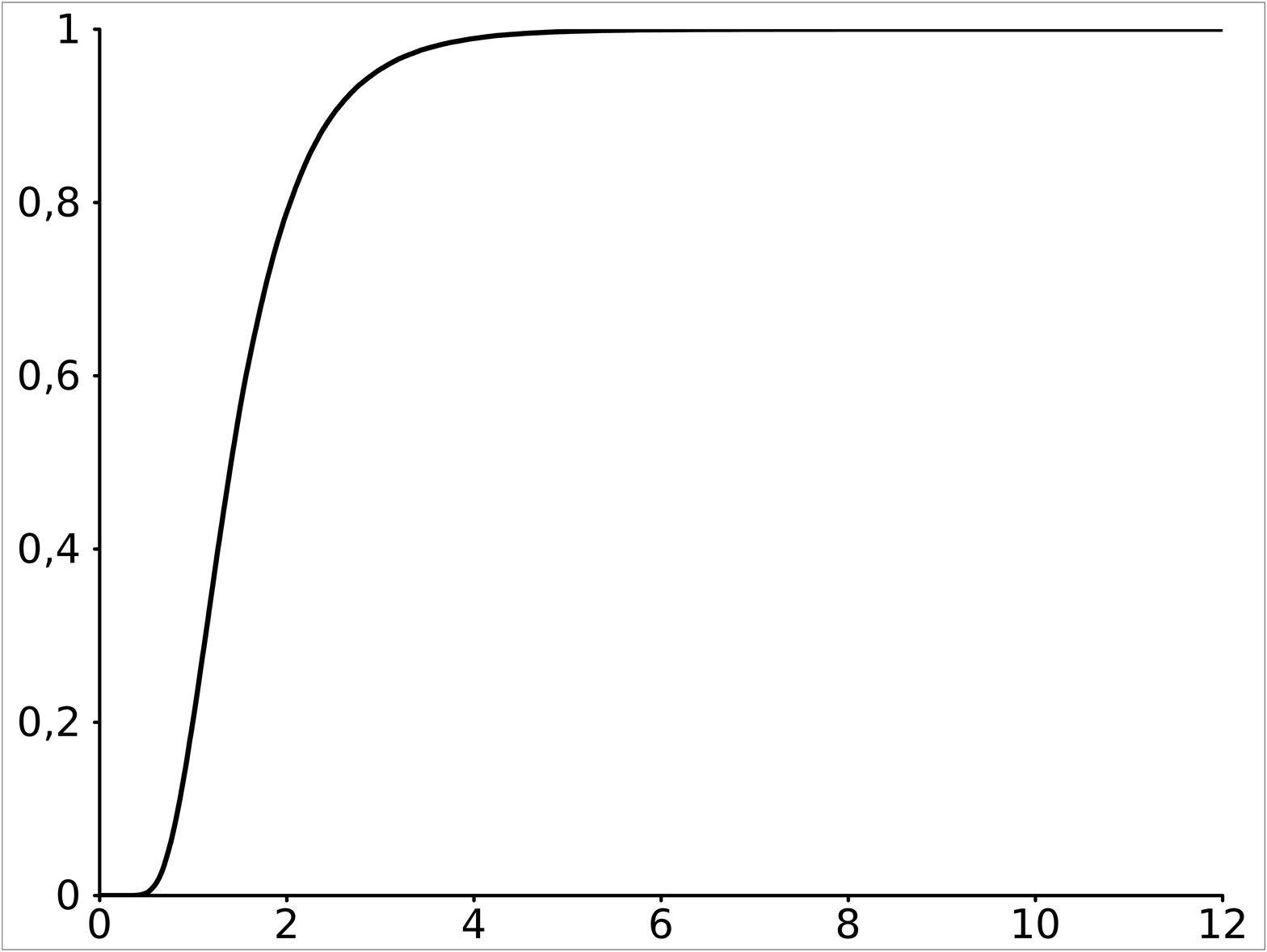}
\end{minipage}
\caption{The empirical distribution function of $W$ (left) and $V$ (right).}
\end{figure}

Next, using different sample sizes from $n=20$ to $n=500$, we simulate
the empirical distribution function of the test statistics $nW_n$ and $nV_n$, respectively.
This was done using $200\,000$ repetitions. 
As shown in Fig. 2, we find that the convergence is very fast overall. 

\begin{figure}[!hbp]
\begin{minipage}[b]{0.45\linewidth} 
\centering
\includegraphics[width=7cm]{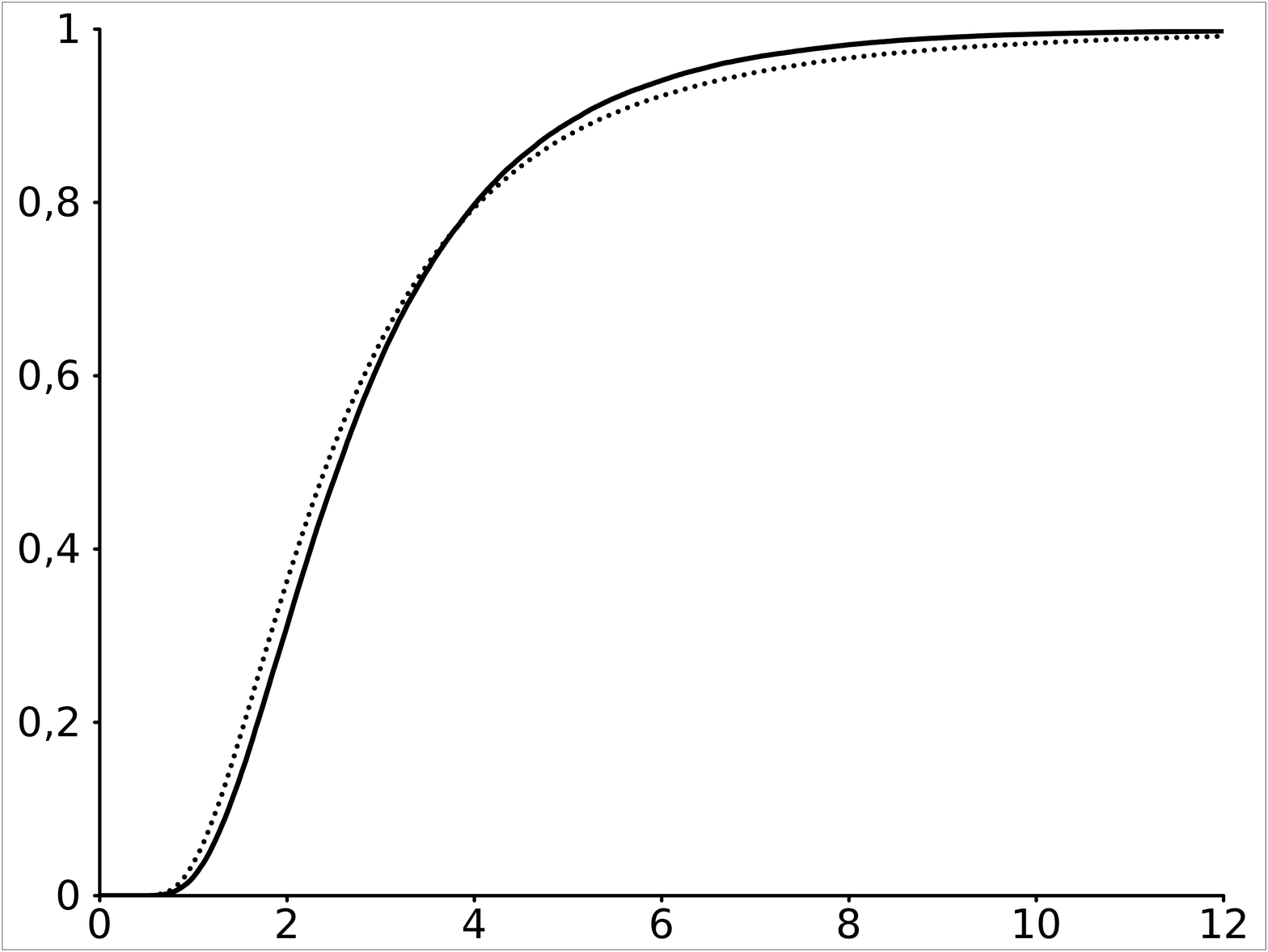}
\end{minipage}
\begin{minipage}[b]{0.45\linewidth} 
\centering
\includegraphics[width=7cm]{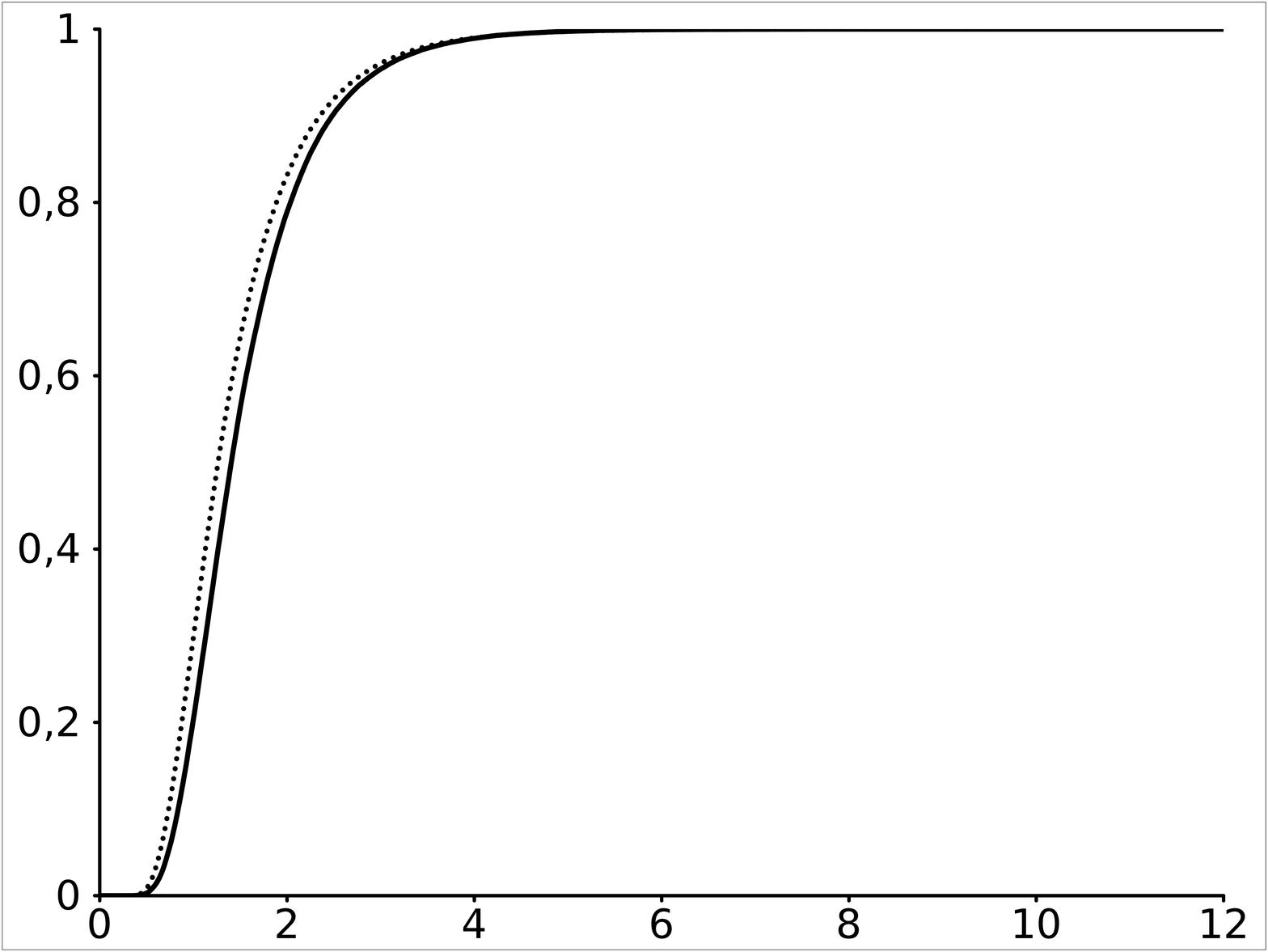}
\end{minipage}
\caption{ The empirical distribution functions of the test statistics
$nW_n$(on the left side) and $nV_n$(on the right side) for $n=20$ (dotted line) and 
the empirical asymptotic distribution functions 
$W$(on the left side) and $V$(on the right side)(thicker line). }
\end{figure}

Table \ref{critical} shows in detail the
empirical critical values of $nW_n$ and $nV_n$ corresponding to the confidence levels $0.85, 0.90, 0.95$
and $0.99$ respectively. The last row corresponding to $n = \infty$ contains
empirical asymptotic critical values for both tests.

\begin{table}[!hbp]
\caption{Critical points of the test statistics $nW_n$ and $nV_n$ for different sample sizes and
different confidence levels.}
\label{critical}      
\begin{tabular}{rrrrrrrrrr}
& & $nW_n$ & & & & &   $nV_n$ & & \\
\hline\noalign{\smallskip}
$ n $   & 0.85 & 0.90 & 0.95 & 0.99 & $ n $ & 0.85 & 0.90 & 0.95 & 0.99 \\
\noalign{\smallskip}\hline\noalign{\smallskip}
20        &  4.60   &  5.43  &  7.00  & 11.40  &     20      &  2.07   &  2.34  &  2.83  &  4.02   \\
50        &  4.52   &  5.25  &  6.66  & 10.76  &     50      &  2.21   &  2.49  &  2.99  &  4.17   \\
100       &  4.49   &  5.20  &  6.50  & 10.40  &    100      &  2.24   &  2.52  &  2.99  &  4.13   \\
200       &  4.48   &  5.15  &  6.39  &  9.87  &    200      &  2.24   &  2.52  &  2.99  &  4.14   \\
500       &  4.47   &  5.13  &  6.31  &  9.39  &    500      &  2.23   &  2.51  &  2.97  &  4.06   \\
$\infty$  &  4.47   &  5.12  &  6.26  &  8.98  &  $\infty$   &  2.22   &  2.49  &  2.95  &  4.02   \\
\noalign{\smallskip}\hline
\noalign{\hrule}
\end{tabular}
\end{table}  

Because of the speed of convergence the asymptotic critical values can be used.
 In the next section we calculate the power of the tests against some alternatives
with finite critical  values due to the similarity of the values.

\subsection{Power of the tests $nV_n$ and $nW_n$}

A simulation study was performed to evaluate the power of the
tests.
In the simulation study we consider some continuous
alternative distributions. All alternative distributions are identified by their
names and are in their standard forms. We give the definition for most
of these distributions. Let $Z$ denote a standard normal random variable.

\begin{enumerate}
\item $Beta(p,q)$  denotes the Beta distribution with density
\hfill\break $f(t)=\Gamma (p+q)t^{p-1}(1-t)^{q-1}/\left( \Gamma (p)\Gamma (q)\right) , \ 0<t<1,\ p,q>0$.
\item The density of  the $Laplace$ distribution is $f(t)=e^{-|t|/2}, \ t\in \mathbb{R}$.
\item The distribution $Lognormal$ denotes the
distribution of the random variable $e^Z$.
\item Two triangle distributions with densities $f(t)=1-|t|, \ -1<t<1$, and
 $f(t)=2-2t, \ 0<t<1$, are denoted, respectively, by $Triangle(I)$ and $Triangle(II)$.
 \item The density of the $Weibull(k)$ distribution is
 $f(t)=kt^{k-1}e^{-t^k}, \ t>0,\ k>0$.
\end{enumerate}

For two tests and sample sizes we use
 the simulated, finite critical points.
 The empirical powers were derived from $200\,000$ simulations
 for all the sample sizes $n=20,50$ and $100$ for two tests. See the details in Table \ref{power}.

 \begin{table}[!hbp]
\caption{Empirical powers (in $\%$) for $nW_n$ and $nV_n$ against some alternatives ($n=20, 50$ and $100$ sample sizes, 
 $*\  100\%$ empirical power, $\alpha $ significance level).}
\label{power}      
\begin{tabular}{l|rrr|rrr||rrr|rrr}
& & $nW_n$ & & & $nW_n$ & & & $nV_n$ & & & $nV_n$ &  \\ 
\noalign{\smallskip}\hline\noalign{\smallskip} 
Alternatives         &  20  &  50 & 100 &  20  &  50 & 100  &  20  &  50 & 100 &  20  & 50& 100\\
\noalign{\smallskip}\hline\noalign{\smallskip}
$N(0,1)$             & 47   &  99  &  *  &  22  &  96  &  *   &  5  &   6 &   8 &   2  &   2 &  4\\
Uniform              &  *   &   *  &  *  &  *   & *    &  *   & 13  &  47 &  93 &   5  &  29 & 82\\
Cauchy               &  88  &  99  &  *  & 84   &  99  &  *   & 88  &  99 &  *  &  84  &  99 &  *\\
Laplace              &  27  &  76  &  97 & 12   &  61  &  93 &  26  &  39 &  55 &  17  &  29 & 43\\
Exp(1)               &  88  &   *  &  *  & 69   &  *   &  *  &  70  &  99 &  *  &  56  &  97 &  *\\
Triangle(I)          &  *   &   *  &  *  &  *   &  *   &  *  &   4  &   7 &  13 &   2  &   3 &  6\\
Triangle(II)         &  *   &   *  &  *  &  *   &  *   &  *  &  21  &  61 &  97 &  11  &  43 & 91\\
Beta(2,2)            &  *   &   *  &  *  &  *   &  *   &  *  &   6  &  15 &  40 &   2  &   7 & 24\\
Weibull(2)           &  *   &   *  &  *  &  *   &  *   &  *  &  12  &  25 &  54 &   5  &  15 & 38\\
Gamma(2,1)           &  25  &  83  &  *  & 10   &  62  &  99 &  40  &  81 &  99 &  27  &  69 & 98\\
Lognormal            &  80  &   *  &  *  & 61   &  *   &  *  &  86  &   * &  *  &  79  &   * &  *\\
Student(5)           &  27  &  82  &  99 & 11   &  67  &  98 &  16  &  19 &  21 &  10  &  12 & 13\\
$\chi^2(1)$          &  88  &   *  &  *  & 71   &  *   &  *  &  94  &   * &  *  &  88  &   * &  *\\
Negativ Exp          &  88  &   *  &  *  & 69   &  *   &  *  &  69  &  99 &  *  &  56  &  97 &  *\\
\noalign{\smallskip}\hline \noalign{\smallskip}
$\alpha$ & & $0.10$ & & & $0.05$ & & & $0.10$ & & & $0.05$ & \\
\noalign{\smallskip}\hline 
\noalign{\hrule}
\end{tabular}
\end{table}
We compare the new test in the location-scale case with Meintanis tests
 based on the empirical characteristic function
 and the empirical momentum generating function from \cite{Me}. 
To the comparison Table 3 from \cite{Me} is used. 
This table next to the power of Meintanis tests contains 
the power of the classical EDF-tests (Kolmogorov-Smirnov, 
Cram\'er-von Mises, Anderson-Darling, Watson) for $n=20$ and $50$ and significance level $\alpha=0.1$. 
In each test in \cite{Me} the location and scale parameter are estimated 
by method of moments or maximum likelihood, 
hereby these tests are adapted to test for composite hypothesis. 
The location and scale test considered in this paper has the greatest power against Cauchy and Laplace alternatives. 
The EDF-tests have greater power against Cauchy and Laplace alternatives than 
Meintanis tests, otherwise the best test is the Meintanis test and our test is the least powerful.

If we test for Logistic location family, we obtain better power than for 
location-scale family, except against gamma, lognormal and $\chi^2(1)$ alternatives. 

A rough general conclusion of this study is that in both cases 
simply computable test statistics are obtained and
 the asymptotic critical values may be used.
For the Logistic location family the test considered is fairly strong,
while for the Logistic location-scale family it seems to be less powerful. 

\section*{Acknowledgements}
The authors are grateful to S. Cs\"org\H o for suggesting the problem and to G. Pap for useful comments and suggestions after carefully reading the manuscript.

\appendix
\section{Proof of Lemma \ref{int_asymp}}
\begin{proof}[Proof of Lemma \ref{int_asymp}] The substitution $x = \frac{t}{1-t}$ yields
\begin{align*}
n\int_{0}^{\frac{1}{n+1}}\ln^k\left(n\frac{t}{1-t}\right)t(1-t)\mathrm{d}t &= n\int_{0}^{\frac{1}{n}}\ln^k(nx)\frac{x}{(1+x)^4}\mathrm{d}x\\
&= \frac{1}{n}\int_{0}^{1}\ln^k y\frac{y}{(1+\frac{y}{n})^4}\mathrm{d}y\\
&= \frac{1}{n}\int_{0}^{1}y \ln^k y\mathrm{d}y +\frac{1}{n}\int_{0}^{1}y\ln^k y\frac{(1-(1+\frac{y}{n})^4)}{(1+\frac{y}{n})^4}\mathrm{d}y\\
&= \frac{1}{n}\int_{0}^{1}y \ln^k y\mathrm{d}y +{\mathcal O}\left(\frac{1}{n^2}\right)\ .
\end{align*}
Since 
\[
\int_{0}^{1}y\mathrm{d}y =\frac{1}{2}
\]
and
\[
\int_{0}^{1}y \ln^{k} y\mathrm{d}y = \lim_{\e \to 0}\left[\frac{y^2}{2}\ln^k y\right]_{\e}^{1}-\frac{k}{2}\int_{0}^{1}y\ln^{k-1}y\mathrm{d}y 
= -\frac{k}{2}\int_{0}^{1}y\ln^{k-1}y\mathrm{d}y\ ,
\]
the exact form of the leading coefficient follows.
\end{proof}

\section{Proof of Lemma \ref{jacobi_integrals}}

The Jacobi polynomials with parameters $(1,1)$ have the following generating function:
\begin{equation}
\label{gen_f}
\sum_{n=0}^{\infty} P_{n}^{(1,1)}(x)z^n = \frac{4}{R(1-z+R)(1+z+R)}\ ,
\end{equation}
where
$$R=\sqrt{1-2zx+z^2}$$
(see \cite{abramowitz_stegun}, Table 22.9).
For fixed $|z|<1$ the only branch point of the square root $R$, as a function of $x$, is located at
\begin{equation}
x_0 = \frac{1}{2}\left(z+\frac{1}{z}\right)\ .
\end{equation}
From elementary conformal mapping it is obvious that $x_0 \in {\mathbb C}\setminus[-1,1]$ as long as $|z| <1$. Therefore there is a unique choice of the branch of $R$ on $[-1,1]$ such that
\begin{equation}
\label{branchR}
\sqrt{1-2zx+z^2}\Big|_{x=1} = 1-z\ .
\end{equation}

\begin{proof}[Proof of Lemma \ref{jacobi_integrals}]
Consider the function
\begin{equation}
f(x) = (1-x^2)\ln\left(\frac{1+x}{1-x}\right) \qquad -1 \leq x \leq 1\ .
\end{equation}
It is easy to see that $|f(x)| <1$ on $[-1,1]$ (the exact upper bound is irrelevant for our purposes).
Consider the integrals
\begin{equation}
a_n =\int_{-1}^{1} P_n^{(1,1)}(x)f(x)\mathrm{d}x \qquad n=0,1,\dots
\end{equation}
and their (formal) generating function
\begin{equation}
g(z) = \sum_{n=0}^{\infty}a_n z^n\ .
\end{equation}
Since
$$
\left|P_n^{(1,1)}(x)\right| \leq n+1 \qquad -1 \leq x \leq 1\ ,
$$
we have $|a_n| \leq 2(n+1)$ and therefore the power series of $g(z)$ converges absolutely and uniformly in the interior of the unit disk $|z|<1$. 
Therefore, for  any fixed $|z|<1$,
\begin{align*}
g(z) = \sum_{n=0}^{\infty}\int_{-1}^{1} P_n^{(1,1)}(x)z^nf(x) \mathrm{d}x &= \int_{-1}^{1}\sum_{n=0}^{\infty}P_n^{(1,1)}(x)z^nf(x) \mathrm{d}x\\
&=\int_{-1}^{1}\frac{4}{R(1-z+R)(1+z+R)}f(x)\mathrm{d}x\ .
\end{align*}
In the last step we used the generating function identity \eqref{gen_f} valid for $|z|<1$.

Assume now that $z$ is real and $0< z <1$. The integral above can be calculated explicitly by using the Euler substitution $u =\sqrt{1-2zx+z^2}$\ :
$$
\left\{
\begin{split}
x &= \frac{z^2+1-u^2}{2z}\\
dx &= -\frac{u}{z}du\\
u_1 & = 1+z\\ 
u_2 & = 1-z\\
\end{split}
\right.
$$
The mapping $x=x(u)$ is strictly decreasing from the interval $[1-z,1+z]$ onto $[-1,1]$.
Simple algebraic manipulations yield
\begin{align*}
&\int_{-1}^{1}\frac{4(1-x^2)}{R(1-z+R)(1+z+R)}\log\left(\frac{1+x}{1-x}\right)\mathrm{d}x\\
&=\int_{1-z}^{1+z}\frac{(u+z-1)(z+1-u)}{z^3}\log\left(\frac{(z+1+u)(z+1-u)}{(u-z+1)(u+z-1)}\right)\mathrm{d}u\\
&=\lim_{\e \to 0}\left[\frac{z^2(u-1)-\frac{1}{3}(u-1)^3}{z^3}\log\left(\frac{(z+1+u)(z+1-u)}{(u-z+1)(u+z-1)}\right)\right]_{1-z+\e}^{1+z-\e}\\
&\quad -\int_{1-z}^{1+z}\frac{z^2(u-1)-\frac{1}{3}(u-1)^3}{z^3}\frac{8uz}{(u^2-(z+1)^2)(u^2-(1-z)^2)}\mathrm{d}u\ .
\end{align*}

The first term gives
\begin{align*}
&\left[\frac{z^2(u-1)-\frac{1}{3}(u-1)^3}{z^3}\log\left(\frac{(z+1+u)(z+1-u)}{(u-z+1)(u+z-1)}\right)\right]_{1-z+\e}^{1+z-\e}\\
\ &= \frac{z^2(z-\e)-\frac{1}{3}(z-\e)^3}{z^3}\left(\log\left(\frac{(2+2z-\e)\e}{(2-\e)(2z-\e)}\right)+\log\left(\frac{(2+\e)(2z-\e)}{(2-2z+\e)\e}\right)\right)\\
\ &= \frac{z^2(z-\e)-\frac{1}{3}(z-\e)^3}{z^3}\log\left(\frac{(2+\e)(2+2z-\e)}{(2-\e)(2-2z+\e)}\right) \to \frac{2}{3}\ln\left(\frac{1+z}{1-z}\right)
\end{align*}

as $\e \to 0$. The integrand in the second term has the partial fraction decomposition
\begin{align*}
&-\frac{8}{3z^2} -\frac{2z^3-12z}{3z^3}\left(\frac{1}{u+z+1} +\frac{1}{u-z+1}\right)\\
&+\frac{8}{3z^3}\left(\frac{1}{u+z+1} -\frac{1}{u-z+1}\right) +\frac{2}{3}\left(\frac{1}{u+z-1}+\frac{1}{u-z-1}\right)
\end{align*}

Therefore,
\begin{align*}
&-\int_{1-z}^{1+z}\frac{z^2(u-1)-\frac{1}{3}(u-1)^3}{z^3}\frac{8uz}{(u^2-(z+1)^2)(u^2-(z-1)^2)}\mathrm{d}u\\
\quad &=
\lim_{\e \to 0}\left[\frac{8}{3z^2}u +\frac{2z^3-12z}{3z^3}\log((u+z+1)(u-z+1))\right.\\
&\left. -\frac{8}{3z^3}\log\left(\frac{u+z+1}{u-z+1}\right)-\frac{2}{3}\log((u+z-1)(u-z-1)) \right]_{1-z+\e}^{1+z-\e}\\
&=\frac{16}{3z} +\frac{2z^3-12z}{3z^3}\log\left(\frac{1+z}{1-z}\right) -\frac{8}{3z^3}\log\left((1+z)(1-z)\right)\ .
\end{align*}

Combining the two expressions above we get
\begin{align}
&\int_{-1}^{1}\frac{4}{R(1-z+R)(1+z+R)}f(x)\mathrm{d}x\\
&= \frac{4}{3z^3}\left[4z^2 +(z^3-3z)\log\left(\frac{1+z}{1-z}\right) -2\log\left((1+z)(1-z)\right)\right]\ ,
\end{align}
for $z \in (0,1)$. With the proper choice of the branches of the logarithms this represents a holomorphic function on the punctured disk $0<|z|<1$.
The Laurent series expansion of the right hand side at $z=0$ can be written down explicitly:
\begin{equation}
\frac{4}{3z^3}\left[4z^2 +(z^3-3z)\log\left(\frac{1+z}{1-z}\right) -2\log\left((1+z)(1-z)\right)\right] = 8\sum_{k=0}^{\infty}\frac{z^{2k+1}}{(2k+1)(2k+3)(k+2)} 
\end{equation}
Thus this function has a removable singularity at $z=0$ and it coincides with $g(z)$ on the interval $(0,1)$. Therefore
\begin{equation}
g(z)=8\sum_{k=0}^{\infty}\frac{z^{2k+1}}{(2k+1)(2k+3)(k+2)}\ , 
\end{equation}
which implies that $a_{2k}=0$ and
\begin{equation}
a_{2k+1}= \frac{8}{(2k+1)(2k+3)(k+2)}\ .
\end{equation}
\end{proof}

 \bibliographystyle{abbrv}
 \bibliography{weighted_correlation_logistic}
\end{document}